\newtheorem{thm}{Theorem}[section]
\newtheorem{lemma}[thm]{Lemma}
\begin{document}

%\begin{frontmatter}

\title{Cubic Thue Inequalities  With Positive Discriminant}

\author{Shabnam Akhtari}
\address{ Fenton Hall\\
University of Oregon\\
Eugene, OR 97403-1222 USA}

 \email {akhtari@uoregon.edu} 
 
\subjclass[2000]{11D75, 11J25, 11D45}
\begin{abstract}
 We will give an explicit upper bound for the number of solutions to cubic inequality $|F(x , y)| \leq h$, where $F(x , y)$ is a cubic binary form with integer coefficients and positive discriminant $D$.  Our upper bound is independent of $h$, provided that $h$ is smaller than $D^{1/4}$.
\end{abstract}
\keywords{ Thue inequalities, The method of Thue-Siegel, Cubic Forms}
%\begin{document}
%\maketitle

\maketitle

\section{Introduction}\label{6s1}

Let $F(x , y)$ be a binary form with integral coefficients. Also assume that the maximal number of pairwise non-proportional linear forms over $\mathbb{C}$ dividing $F$ is at least $3$.  For positive integer $h$ consider the following   \emph{Thue inequality}
\begin{equation}\label{ineq}
0 < \left|F(x , y)\right| \leq h.
\end{equation}
This inequality can be considered as a finite number of  \emph{Thue equations}
\begin{equation}\label{Thue}
F(x , y) = m,
\end{equation}
 where $0 < |m| \leq h$. By a well-known result of Thue \cite{Thu},  the equation (\ref{Thue}) has only finitely many solutions in integers $x$ and $y$.  From this, one can easily deduce that  inequality (\ref{ineq}) has only finitely many solutions in integers $x$ and $y$.

 In this manuscript, we will study the inequality $| F(x , y) | \leq h$
where $h$ is a positive  integer smaller than $D^{1/4}$. Our goal is to give an upper bound for the number of   solutions to the above inequality.  For a treatment of cubic Thue inequalities with negative discriminant, we refer the reader to a work of Wakabayashi  \cite{Wak}, where  Pad\'e approximations and Ricket's integrals are used.  In general,  upper bounds for the number of solutions to a Thue equation $F(x , y) =m$, depend on the number of prime factors of $m$ (see \cite{Mah23}, \cite{Bom} and \cite{Ste}). The problem of counting the number of solutions of Thue inequalities $F(x , y) \leq h$  and obtaining upper bounds independent of the value of $h$ for ``small'' integers $h$ has been studied by many mathematicians. In 1929, Siegel \cite{Sie},  by means of an approximation method in which hypergeometric functions are used, showed that the number of solutions of the cubic inequality
$$
1\leq F(x , y) \leq h
$$
in integers $x$, $y$ with $\gcd(x,y)= 1$,  $y> 0$ or $(x,y) = (1,0)$,  is at most $18$ if $D > 4.3^{107} h^{30}$. In 1949, A.E. Gel'man showed, by refining Siegel's estimates, that $18$ can be replaced by $10$ if one assumes that $D > ch^{36}$, where $c$ is an absolute constant (we refer the reader to Chapter 5 of \cite{Del} for a proof).
The following are  our main theorems.
\begin{thm}\label{newmain}
Let  $F(x , y)$ be a binary cubic form with positive discriminant $D$.   Suppose $h$ is an integer and satisfies  $h = \frac{1}{2\pi} (3D)^{1/4 -\epsilon}$ for a positive value $\epsilon <1/4$. Then the inequality
  \begin{equation*}\label{mainineq}
\left|F(x , y)\right| \leq h
\end{equation*}
has at most $ 9  +\frac{  \log \left(\frac{3}{8\epsilon} +  \frac{1}{2}\right)}{\log 2}  $ solutions in  coprime integers $x$ and $y$ with $y \neq 0$.
\end{thm}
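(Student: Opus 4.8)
\medskip
\noindent\emph{Plan.} The plan is to combine the classical gap principle with the Thue--Siegel (hypergeometric) method. Since $D>0$, first write $F(x,y)=a_0(x-\alpha_1y)(x-\alpha_2y)(x-\alpha_3y)$ with $a_0\in\mathbb{Z}\setminus\{0\}$ and $\alpha_1<\alpha_2<\alpha_3$ real, and note that $(x,y)$ solves $|F|\le h$ iff $(-x,-y)$ does, so it suffices to bound primitive solutions with $y>0$. I would dispose of the reducible case at once --- then the root nearest a solution has degree $\le 2$ and an elementary irrationality estimate already bounds such solutions --- so assume $F$ irreducible, and after a reduction that controls $|a_0|$ and the pairwise root distances by powers of $D$, attach to each primitive solution $(x,y)$ with $y$ large (in terms of $D$) the nearest root $\alpha=\alpha_i$. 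The hypothesis $h=\frac1{2\pi}(3D)^{1/4-\epsilon}$ then keeps $x/y$ within half the distance from $\alpha_i$ to the other roots, so $|x-\alpha_jy|\ge\frac12|\alpha_i-\alpha_j|y$ for $j\ne i$, and from $|F(x,y)|\le h$,
\[
\Bigl|\alpha_i-\frac xy\Bigr|\ \ll\ \frac{h}{L_i\,y^{3}},\qquad L_i:=|a_0|\prod_{j\ne i}|\alpha_i-\alpha_j|,\qquad L_1L_2L_3=\frac{D}{|a_0|}.
\]
Since $h$ is far below $D^{1/3}$, every large solution is therefore a rational approximation to one $\alpha_i$ of exponent effectively $3$. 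The boundedly many solutions with small $y$ (for each root and each admissible $y$ there are $O(h/(L_i y^{2}))+1$ admissible $x$), together with the other bounded contributions, will account for the constant $9$.

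\medskip
\noindent\emph{The gap principle.} Next, for a fixed root $\alpha=\alpha_i$ I would list the attached primitive solutions as $(x_1,y_1),(x_2,y_2),\dots$ with $0<y_1\le y_2\le\cdots$; since $x_ky_\ell-x_\ell y_k$ is a nonzero integer, the triangle inequality gives
\[
1\ \le\ |x_ky_\ell-x_\ell y_k|\ =\ y_ky_\ell\Bigl|\frac{x_k}{y_k}-\frac{x_\ell}{y_\ell}\Bigr|\ \ll\ \frac{h}{L_i}\cdot\frac{y_\ell}{y_k^{2}}\qquad(k<\ell),
\]
so $y_{k+1}\gg (L_i/h)\,y_k^{2}$. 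Hence the normalised heights $z_k:=L_iy_k/h$ satisfy $z_{k+1}\gg z_k^{2}$: once $z_k$ exceeds a fixed absolute threshold the attached solutions grow doubly exponentially, which is exactly what turns a polynomial upper bound on the size of solutions into a $\log\log$ bound on their number.

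\medskip
\noindent\emph{The Thue--Siegel step and the cascade.} To obtain such an upper bound I would run the hypergeometric (Pad\'e) construction for the algebraic numbers $\alpha_i$: build explicit polynomial pairs $P_n,Q_n\in\mathbb{Z}[t]$, of degree and logarithmic height $O(n)$, with $Q_n(t)\alpha_i-P_n(t)$ small near the relevant point, and evaluate them at one solution $(x_0,y_0)$. This produces the quantitative Thue--Siegel principle: a single good approximation to $\alpha_i$ confines every later attached solution to $y\le y_0^{\lambda}$, where the amplification exponent $\lambda=\lambda(\epsilon)$ blows up like $\epsilon^{-1}$ as $\epsilon\to0$ --- precisely because near the boundary $h$ of order $D^{1/4}$ the construction only barely has the room it needs --- and where the constants $2\pi$ and $3$ in the hypothesis are exactly those emerging from the Gamma-function and discriminant estimates. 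Feeding $y\le y_0^{\lambda}$ back into the gap principle, the attached solutions beyond the threshold number at most $\log_2\log_2(y_0^{\lambda}/y_0)+O(1)=\log_2\lambda+O(1)$; carrying out the accounting carefully and optimising the free parameters of the construction then yields the total bound $9+\frac{\log(\frac{3}{8\epsilon}+\frac12)}{\log 2}$.

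\medskip
\noindent\emph{Main difficulty.} The hard part will be the third step: extracting from the hypergeometric construction an \emph{explicit} bound whose dependence on $D$ is sharp enough to survive all the way up to $h$ of order $D^{1/4}$, uniformly over every cubic form of discriminant $D$, and tracking all the constants so that the cascade comes out with the precise length $\log_2(\frac{3}{8\epsilon}+\frac12)$. By contrast, the real splitting and nearest-root bookkeeping, the reduction of $F$, the gap principle, the count of the small solutions, and the reducible and small-$D$ cases should be comparatively routine.
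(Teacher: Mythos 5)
Your outline leaves a genuine gap, and it also misattributes where the $\epsilon$-dependence comes from. In the paper the bound $\frac{1}{\log 2}\log\bigl(\frac{3}{8\epsilon}+\frac12\bigr)$ is \emph{not} produced by a Thue--Siegel amplification exponent $\lambda(\epsilon)\sim\epsilon^{-1}$; no new hypergeometric construction is carried out at all. The constant $9$ is imported wholesale from Evertse's 1983 theorem, which bounds by $9$ the number of coprime solutions with $H(x,y)\ge\frac32(3D)^{1/2}h^3$. What remains is to count solutions below that threshold, and this is done purely by a gap principle --- but one formulated for the complex resolvent forms $\xi,\eta$ with $\xi\eta=H$ and $\xi^3-\eta^3=3\sqrt{-3D}\,F$, not for the real roots. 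The crucial anchor is that after replacing $F$ by a reduced equivalent form, \emph{every} solution with $y\ne0$ satisfies $H(x,y)\ge\frac12\sqrt{3D}$, hence $|\xi(x,y)|\ge(3D)^{1/4}/\sqrt2$. Feeding this into $|\xi_{k+1}|\ge|\xi_k|^2/(\pi h)$ with $h=(3D)^{1/4-\epsilon}/(2\pi)$ gives $|\xi_k|\ge 2^{2^{k-2}-1}(3D)^{1/4+(2^{k-1}-1)\epsilon}$, and the number of steps needed to climb from $(3D)^{1/4}$ to the Evertse threshold $(3D)^{1/4}h^{3/2}$ is exactly $\log_2\bigl(\frac{3}{8\epsilon}+\frac12\bigr)$. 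The $\epsilon$-dependence is entirely a bookkeeping consequence of how slowly the squaring map $t\mapsto t^2/(\pi h)$ grows when $t$ and $2\pi h$ are both near $(3D)^{1/4}$.

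Two concrete defects in your version. First, your count of the small solutions does not close: you propose ``$O(h/(L_iy^2))+1$ admissible $x$ for each root and each admissible $y$,'' but summing the $+1$ over $y$ up to the point where your gap principle $z_{k+1}\gg z_k^2$ becomes effective (i.e.\ $z_k=L_iy_k/h\gg1$) contributes on the order of $h/L_i$ solutions, which is not absolutely bounded; moreover the individual $L_i=|a_0|\prod_{j\ne i}|\alpha_i-\alpha_j|$ need not be bounded below even for a reduced form, only their product $D/|a_0|$ is controlled. The paper's use of the Hessian sidesteps both problems, since $H(x,y)\ge\frac12\sqrt{3D}$ holds uniformly for all $y\ne0$. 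Second, the step you yourself flag as the main difficulty --- extracting an explicit amplification exponent $\lambda(\epsilon)$ from the hypergeometric method sharp enough to survive up to $h\asymp D^{1/4}$ and to reproduce the exact constant $\frac{3}{8\epsilon}+\frac12$ --- is precisely the content you would need to supply, and it is not needed at all on the paper's route: the only hypergeometric input is the already-published Theorem of Evertse giving the $9$. As written, your proposal proves the theorem only modulo that unproved amplification step and the unrepaired small-solution count.
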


The upper bound in Theorem \ref{newmain} gets larger as $\epsilon$ approaches $0$. It turns out that we can get a better upper bound if we use the value of discriminant in our upper bound; in other words, if we are willing to have more dependence on our form $F$ in the upper bound. 
\begin{thm}\label{main}
  Let  $F(x , y)$ be a binary cubic form with positive discriminant $D$. Suppose $h$ is an integer satisfying  $h = \frac{1}{2\pi} (3D)^{1/4 -\epsilon}$ with  $0 < \epsilon < 1/4$. Then the inequality
 \begin{equation*}
\left|F(x , y)\right| \leq h
\end{equation*}
  has at most $ 12 +\frac{3}{\log 2}  \log \frac{3}{8 \left( \left(\frac{3}{2}\right)\epsilon +  \frac{ \log 2}{\log (3D)} \right)} $ solutions in  coprime integers $x$ and $y$ with $y \neq 0$.
\end{thm}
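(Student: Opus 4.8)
The plan is to carry out the classical Thue--Siegel argument for cubic fields in quantitative form: split the solutions according to which of the three real roots they approximate, control the ``intermediate'' range by an Archimedean gap principle, control the ``large'' range by the hypergeometric (Rickert/Pad\'e) approximations to $(1-z)^{1/3}$, and organize the bookkeeping so that the form $F$ enters the final count only through $\log(3D)$.

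\textbf{Step 1 (set-up and reductions).} Since $D>0$ the form splits over $\mathbb{R}$ as $F(x,y)=a(x-\alpha_{1}y)(x-\alpha_{2}y)(x-\alpha_{3}y)$ with $a\in\mathbb{Z}$, real $\alpha_{1}<\alpha_{2}<\alpha_{3}$, and $D=a^{4}\prod_{i<j}(\alpha_{i}-\alpha_{j})^{2}$. The number of primitive solutions and the discriminant are invariant under $GL_{2}(\mathbb{Z})$, and the constraint $y\neq 0$ is affected by at most a bounded number of solutions, so after a reduction step I may assume $F$ is irreducible and ``reduced'', in particular that $|a|$ and the root separations are controlled in terms of $D$; I also dispose of the finitely many solutions with $y$ below an explicit threshold by a direct count, and replace $(x,y)$ by $-(x,y)$ so that $y>0$. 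For a remaining solution, $|a|\prod_{j}|x-\alpha_{j}y|\le h$ with $y$ large forces $x/y$ to be extremely close to exactly one root $\alpha=\alpha_{i}$: bounding the two ``far'' factors below by half the corresponding root separations yields $|x-\alpha y|\le c_{1}h\,y^{-2}$ with an explicit $c_{1}$ (depending on the class, but with product over the classes controlled by $D/|a|$). This assigns to each solution a unique nearest root and partitions the solutions into at most three classes.

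\textbf{Step 2 (gap principle).} Fix a class with nearest root $\alpha$ and list its solutions by $0<y_{1}\le y_{2}\le\cdots$. If $0<y<y'$ both occur then $1\le|xy'-x'y|\le y'|x-\alpha y|+y|x'-\alpha y'|\le 2c_{1}h\,y'y^{-2}$, hence $y'\ge y^{2}/(2c_{1}h)$. Writing $h=\frac{1}{2\pi}(3D)^{1/4-\epsilon}$, the multiplier $1/(2c_{1}h)$ is a positive power of $3D$ whose exponent is governed by $\epsilon$ together with the constant from $c_{1}$; setting $\beta=\log\bigl(1/(2c_{1}h)\bigr)$ and iterating one gets $\log y_{n}\ge(2^{\,n-1}-1)\beta$, so the number of solutions of the class with $y_{n}\le Y$ is at most $1+\log_{2}\!\bigl(1+\frac{\log Y}{\beta}\bigr)$. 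Substituting the value of $h$ into $\beta$, together with the admissible choice of the threshold $Y$, is what produces the denominator $8\bigl(\frac{3}{2}\epsilon+\log 2/\log(3D)\bigr)$ inside the logarithm; allowing $\log(3D)$ to occur there, rather than bounding $\beta$ from below purely in terms of $\epsilon$, is exactly the sharpening of Theorem~\ref{main} over Theorem~\ref{newmain}.

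\textbf{Step 3 (large solutions and assembly).} To supply the threshold $Y$ and cap the remaining solutions I would use the hypergeometric construction: for the cubic irrational $\alpha$ one builds, for each $r$, integer binary forms $P_{r},Q_{r}$ with $|\alpha Q_{r}-P_{r}|$ small and $|Q_{r}|$ explicitly bounded, so that the solution of least denominator in the class serves as the auxiliary good approximation in the Thue--Siegel principle; this pins every further solution of the class below an explicit $Y$, up to a bounded number of exceptions, and the normalization $h=\frac{1}{2\pi}(3D)^{1/4-\epsilon}$ (with $\frac{1}{2\pi}$ and $(3D)^{1/4}$ reflecting the archimedean and $p$-adic sizes of the hypergeometric coefficients) is precisely what makes the principle applicable. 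Summing over the at most three classes the per-class estimate ``(bounded large count) $+\,1+\log_{2}(1+\log Y/\beta)$'' and adding back the bounded number of small, reducible, and $y=0$ exceptional solutions produces the absolute constant $12$ and the term $\frac{3}{\log 2}\log\frac{3}{8((3/2)\epsilon+\log 2/\log(3D))}$. The main obstacle is Step 3: making the hypergeometric construction completely explicit --- the archimedean and $p$-adic sizes of $P_{r},Q_{r}$, the ratio of consecutive $|Q_{r}|$, and the interaction with $h\asymp D^{1/4-\epsilon}$ --- so that the large solutions are genuinely bounded by an absolute number and the stated threshold and constants fall out; a secondary difficulty is arranging the reduction of $F$ and the root-separation estimates of Step~1 so that each per-class $c_{1}$, and hence $\beta$, admits a lower bound of exactly the shape needed for the displayed logarithmic term.
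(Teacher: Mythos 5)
Your plan is the classical real-root version of the Thue--Siegel argument, which is genuinely different from what the paper does: the paper never works with the three real roots $\alpha_{1},\alpha_{2},\alpha_{3}$ at all. Instead it uses the syzygy $4H^{3}=G^{2}+27DF^{2}$ to factor $H^{3}=\xi^{3}\eta^{3}$ over the imaginary quadratic field $\mathbb{Q}(\sqrt{-3D})$, obtaining the diagonal identity $\xi^{3}-\eta^{3}=3\sqrt{-3D}\,F$ with $|\eta/\xi|=1$, partitions solutions by which cube root of unity $\eta(x,y)/\xi(x,y)$ is nearest to, and runs the gap principle on $|\xi_{i}|$ rather than on $y_{i}$. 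As written, your proposal has gaps that are not just bookkeeping. First, your Step 3 is not a proof but a program: the entire hypergeometric/Pad\'e construction that is supposed to cap the large solutions is deferred, and you yourself flag it as ``the main obstacle.'' The paper does not need to build it, because Evertse's theorem (Theorem \ref{T3evertse}) is quoted as a black box: at most $9$ solutions satisfy $H(x,y)\geq\frac{3}{2}(3D)^{1/2}h^{3}$, and that $9$ plus the $3$ from the ``$+1$ per class'' in Lemma \ref{finallemma2} is exactly where the constant $12$ comes from. Without an explicit substitute for that theorem your constant $12$ is unsupported.

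Second, your gap principle lacks the uniform starting point that makes the iteration bite. You begin from $y'\geq y^{2}/(2c_{1}h)$ and set $\beta=\log\bigl(1/(2c_{1}h)\bigr)$, but nothing guarantees $\beta>0$: the first solution in a class can have $y_{1}=1$, and $c_{1}$ depends on root separations that are not uniformly controlled, so the recursion can be vacuous. Your proposed fix --- ``dispose of the finitely many solutions with $y$ below an explicit threshold by a direct count'' --- does not give an absolute bound: for fixed small $y$ the number of integers $x$ with $|F(x,y)|\leq h$ is governed by the measure of $\{x:|F(x,y)|\leq h\}$, which can grow with $h$ (hence with $D$) when a coefficient or a root separation is small. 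The paper's substitute is Lemma \ref{6mike}: for a reduced form, \emph{every} solution with $y\neq 0$ satisfies $H(x,y)\geq\frac{1}{2}\sqrt{3D}$, hence $|\xi(x,y)|\geq(3D)^{1/4}/\sqrt{2}$, so the very first term of the gap sequence is already large and one application of $|\xi_{2}|\geq|\xi_{1}|^{2}/(\pi h)$ with $h=\frac{1}{2\pi}(3D)^{1/4-\epsilon}$ launches the doubly exponential growth $|\xi_{k}|\geq 2^{2^{k-2}-1}(3D)^{1/4+(2^{k-1}-1)\epsilon}$; solving $2^{k-2}\bigl(\frac{3}{2}\epsilon+\frac{\log 2}{\log(3D)}\bigr)>\frac{3}{8}$ is what yields the displayed logarithmic term, with the factor $3$ coming from the three resolvent pairs. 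You would need an analogue of that uniform lower bound in the real-root setting (and a verified value of $c_{1}$) before your $\beta$ and your threshold $Y$ produce the stated constants; as it stands they are asserted rather than derived.
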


Note that our bound in the above theorem can be seen as an absolute bound, as the number of conjugacy classes of cubic binary forms with bounded discriminant is finite.

It is worth to mention here that in order  to have our upper bounds independent of $h$,  we must take $h$ smaller than $D^{1/4}$, and therefore, in some sense the dependence of $h$ on $D$ is sharp in Theorems  \ref{newmain} and \ref{main}. A cubic form of discriminant $D$ looks generically like
\begin{equation}\label{gen}
G(x , y) = l_{0}D^{1/4} x^3 + l_{1}D^{1/4}x^2y + l_{2} D^{1/4} xy^2 + l_{3} D^{1/4} y^3,
\end{equation}
where $l_{i}$ are small numbers (see the formula for discriminant in (\ref{disc}) ). Now consider the inequality 
$$
G(x , y) \leq k.
$$
Assume  $k > l_{0} D^{1/4}$ and  write $k = v l_{0} D^{1/4}$, where $v >1$. Then for any integer $X$ with $|X| \leq v$, the pair $(X, 0)$ is a solution to (\ref{gen}). Therefore the number of solutions depends on $v$ and consequently on $k$. To avoid only having solutions of the form $(x , 0)$, we can let $SL_{2}(\mathbb{Z})$ act on the form $G$ and construct non-trivial solutions; solutions $(x , y)$ with $y \neq 0$ (see Section \ref{6s2}  for details on equivalent forms). It is well-known that a binary cubic form with positive discriminant $D$ is equivalent to a cubic form, namely a reduced form,  with leading coefficient that is bounded in absolute value by $ D^{1/4}$. Reduced forms are defined in Section \ref{6s2}.

%%%%%%%%%%%%%%%%%%%%%%%%%%%%%%%%%%%%%%%%%%%%%%%%%%%%%%%%%%%%%%%%
The problem in hand has been studied by Evertse and Gy\H{o}ry (see \cite{EG1} and \cite{EG16})  in a  general set up; they have considered the Thue inequality
$$0 < |F(x , y)| \leq h$$ for binary forms of arbitrary degree $n\geq 3$.
Define, for $ 3 \leq n < 400$
$$
\left( N(n) , \delta(n) \right) = 
\left( 6 n 7^{\binom{n}{3}},\,  \frac{5}{6} n (n-1) \right) $$
and for $n > 400$
$$
\left( N(n) , \delta(n) \right) = \left( 6n , \, 120(n -1)\right). 
$$
They prove that if
$$
|D| > h^{\delta{n}} \exp (80 n (n -1))
$$
then the number of solutions to $0 < |F(x , y)| \leq h$ in co-prime integers $x$ and $y$, is at most $N(n)$.
Further, Gy\H{o}ry \cite{Gyo1} showed for binary forms $F$ of degree $n \geq 3$, that if $0 <a < 1$ and 
$$
\left| D \right| \geq n^n (3.5^{n} h^2)^{\left(2(n-1)/(1-a)\right)},
$$
then
the number of solutions to $0 < |F(x , y)| \leq h$ in co-prime integers $x$ and $y$ is
at most $$25n + (n+2) \left(\frac{2}{a} + \frac{1}{4}\right),$$
and if $F$ is reducible then at most 
$$5n + (n+2) \left(\frac{2}{a} + \frac{1}{4}\right).$$
Here we are able to improve these results for the particular case of $n=3$.

The cubic equation 
\begin{equation}\label{61.2}
|F(x , y)| = 1
\end{equation}
 has been very well studied.  Evertse \cite{Ev}  refined the techniques used in \cite{Sie} to show for  irreducible cubic form $F$  of positive discriminant, that equation
(\ref{61.2})
 has at most $12$ solutions in integers $x$ and $y$. Later, Bennett \cite{Ben} improved this upper bound to $10$.
Here we will  appeal to methods that deal with the equation $|F(x , y)| = 1$, particularly those from \cite{Ev}, to prove similar upper bounds for the number of solutions to (\ref{ineq}), when $h = \frac{1}{2\pi} (3D)^{1/4 -\epsilon}$ for a positive value of $\epsilon$.

Throughout this manuscript, by a solution $(x , y)$, we mean $x, y \in \mathbb{Z}$, $y \neq 0$ and $\gcd(x , y) =1$.

%-------------------------------------------------------------------------------------
\section{Invariants and  Covariants  of Binary Cubic Forms }\label{6s2}
%------------------------------------------------------------------------------------------

Let
$F = ax^{3} + bx^{2}y + cxy^{2} + dy^{3}$
be an irreducible binary cubic form. The discriminant of $F$ is 
\begin{equation}\label{disc}
D = 18 a b c d + b^{2} c^{2} - 27 a^{2} d^{2} - 4 a c^{3} - 4 b^{3} d = a^{4} \prod_{i , j} (\alpha_{i} - \alpha_{j})^{2},
\end{equation}
where $\alpha_{1}$, $\alpha_{2}$ and $\alpha_{3}$ are the roots of polynomial $F(x , 1)$.

For the cubic form $F$, we define  an associated quadratic form, the Hessian $H = H_{F}$, and a cubic form $G = G_{F}$, by 
\begin{eqnarray*}
H(x , y) & = & - \frac{1}{4} \left(\frac{\delta^{2}F}{\delta x^{2}}\frac{\delta^{2}F}{\delta y^{2}} -\left( \frac{\delta ^{2}F}{\delta x \delta y}\right)^{2}\right) \\ \nonumber
& = &  A x^{2} + B x y + C y^{2} 
\end{eqnarray*}
and
$$
G(x,y) = \frac{\delta F}{\delta x} \frac{\delta H}{\delta y} - \frac{\delta F}{\delta y} \frac{\delta H}{\delta x}.
$$
These forms satisfy a covariance property; i.e.
$$
H_{F \circ \gamma } = H_{F} \circ \gamma \qquad  \textrm{and} \qquad  G_{F \circ \gamma } = G_{F} \circ \gamma
$$
for all $\gamma \in GL_{2}(\mathbb{Z})$.

We call forms $F_{1}$ and $F_{2}$ equivalent if they are equivalent under $GL_{2} (\mathbb{Z})$-action; i.e. if there exist integers $a_{1}$ , $a_{2}$ , $a_{3}$ and $a_{4}$ such that 
$$
F_{1}(a_{1} x + a_{2}y , a_{3}x + a_{4}y) = F_{2} (x , y) 
$$
for all $x$, $y$ , where $a_{1} a_{4} - a_{2} a_{3} = \pm 1$.

We denote by $N_{F}$ the number of solutions in coprime integers $x$ and $y$ of inequality  $F(x , y) \leq h$. If $F_{1}$ and $F_{2}$ are equivalent, then $N_{F_{1}} = N_{F_{2}}$  and  $D_{F_{1}} = D_{F_{2}}$.

For
$F(x,y) = ax^{3} + bx^{2}y + cxy^{2} + dy^{3}$ with discriminant $D$, it follows by routine calculation that 
$$A = b^{2} -3 a c  ,\ B = b c - 9a d ,\   C = c^{2} -3 b d $$
and 
$$ B^{2} - 4 AC = -3 D .$$  
Note that  if $F$ has positive discriminant  then $H$ is positive definite.

Further, these forms are related to $F (x , y)$ via the identity
\begin{equation}\label{65.1}
4 H(x , y)^{3} = G (x , y) ^{2} + 27 D F(x , y) ^{2}.
\end{equation}

Binary cubic form $F$ is called \emph{reduced}  if the Hessian  
$$H(x , y) = A x^{2} + B x y + C y^{2}$$ 
of $F$
satisfies
$$
C \geq A \geq |B|.
$$
It is a basic fact (see \cite{Del}) that every cubic form of positive discriminant is equivalent to a reduced form  $F(x , y)$ .

\begin{lemma}\label{6mike}
Let $F$ be an irreducible binary cubic form with positive discriminant $D$ and Hessian $H$.  Assume that $F$ is reduced. For all integer pair $(x_{1} , y_{1})$ with $y_{1} \neq 0$, we have $H(x_{1} , y_{1}) \geq \frac{1}{2}\sqrt{3D} $.
\end{lemma}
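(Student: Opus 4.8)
The plan is to combine the completing-the-square form of the Hessian with the reduction inequalities $C \ge A \ge |B| \ge 0$. Since $D>0$ we already know $B^2 - 4AC = -3D < 0$, so $A,C>0$ and $H$ is positive definite; in particular the minimum of $H$ over the relevant integer points is attained. The algebraic fact I would rely on is the identity
\[
4A\,H(x,y) = (2Ax+By)^2 + 3D\,y^2,
\]
an immediate consequence of $4AC - B^2 = 3D$. It gives $H(x_1,y_1) \ge \frac{3D}{4A}\,y_1^2$ for every integer pair $(x_1,y_1)$, and the whole lemma reduces to showing that when $y_1 \neq 0$ this bound (or a slightly sharper one) is at least $\frac12\sqrt{3D}$.

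I would first treat $|y_1| \ge 2$, which is the easy range. Here $H(x_1,y_1) \ge 4\cdot \frac{3D}{4A} = \frac{3D}{A}$, and because $3D = 4AC - B^2 \ge 4A^2 - A^2 = 3A^2$ (using $C \ge A$ and $|B| \le A$) we have $A \le \sqrt D$, so $\frac{3D}{A} \ge 3\sqrt D > \frac12\sqrt{3D}$, with a large margin.

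The case $|y_1| = 1$ is the heart of the matter, since the crude bound $\frac{3D}{4A}$ is not in general as large as $\frac12\sqrt{3D}$ --- it is so precisely when $3D \ge 4A^2$, whereas reducedness only yields $3D \ge 3A^2$ --- so one has to use that $x_1$ is an integer. I would show that $H(x_1,\pm 1) \ge C$ for every integer $x_1$: from $H(x_1,1) - C = x_1(Ax_1+B)$ and $|B| \le A$ one checks that $Ax_1 + B \ge 0$ for $x_1 \ge 1$ and $Ax_1 + B \le 0$ for $x_1 \le -1$, so the product is nonnegative in either case (and zero at $x_1=0$); the sign $y_1 = -1$ follows from $H(x_1,-1) = H(-x_1,1)$. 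Hence $H(x_1,y_1) \ge C$ whenever $|y_1| = 1$. Finally $3D = 4AC - B^2 \le 4AC \le 4C^2$ (using $A \le C$), so $C \ge \frac12\sqrt{3D}$, which closes this case. Combined with the previous paragraph, $H(x_1,y_1) \ge \frac12\sqrt{3D}$ whenever $y_1 \neq 0$.

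Two structural remarks. The hypothesis $y_1 \neq 0$ cannot be dropped: for $y_1 = 0$ one only gets $H(x_1,0) = Ax_1^2 \ge A$, and for a reduced form $A$ may be far below $\frac12\sqrt{3D}$. Also, irreducibility of $F$ plays no role here; everything rests on $D>0$ and on $F$ being reduced. The only genuinely delicate point is the integrality argument in the $|y_1| = 1$ case, which is exactly what blocks a one-line proof by completion of squares; I expect that to be the main obstacle, with the rest being bookkeeping with the reduction inequalities.
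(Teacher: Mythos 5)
Your proof is correct and complete. Note, though, that the paper does not actually prove this lemma: its entire ``proof'' is a citation to Lemma 5.1 of Bennett's paper \emph{On the representation of unity by binary cubic forms}, so there is no internal argument to compare against. What you have written is a self-contained elementary proof of the cited fact, and it is essentially the standard reduction theory of positive definite binary quadratic forms: from $4A\,H(x,y)=(2Ax+By)^2+3Dy^2$ one gets $H\ge \tfrac{3D}{A}\ge 3C-$something for $|y|\ge 2$, while for $|y|=1$ the identity $H(x_1,1)-C=x_1(Ax_1+B)$ together with $|B|\le A$ shows the minimum over $y\neq 0$ is exactly $C=H(0,1)$, and $C\ge\tfrac12\sqrt{3D}$ follows from $3D=4AC-B^2\le 4C^2$. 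Your two side remarks are also accurate: irreducibility is irrelevant here, and the restriction $y_1\neq 0$ is essential since $H(1,0)=A$ can be as small as roughly $\sqrt{D/3}\cdot\sqrt{3}/... $ well below $\tfrac12\sqrt{3D}$ (reducedness only forces $A\le\sqrt{D}$, not a lower bound). The one case worth double-checking in your write-up is $x_1\le -1$ in the $|y_1|=1$ analysis, where the product $x_1(Ax_1+B)$ is a nonpositive number times a nonpositive number; you handle it correctly. In short: the argument closes the lemma without recourse to the external reference, which is arguably a small improvement in self-containedness over the paper.
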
 
\begin{proof}
This is Lemma 5.1 of \cite{Ben}.
\end{proof}

%------------------------------------------------------------------------------------
\section{Reduction to a Diagonal Form}\label{6s3}
%--------------------------------------------------------------------------------------------

Let $\sqrt{-3D}$ be a fixed choice of the square-root of $-3D$. we will work in the number field $M = \mathbb{Q}(\sqrt{-3D})$. 
By syzygy (\ref{65.1}), one may write 
$$H (x , y)^3 =  U(x, y ) V(x, y)$$
 where 
$$ 
U(x , y) = \frac{G(x , y) + 3\sqrt{-3D} F(x , y)}{2} ,
 $$
$$
 V(x , y) = \frac{G(x , y) - 3\sqrt{-3D} F(x , y)}{2}.
 $$ 
Then $ U$ and $V$ are cubic forms with coefficients belonging to $M$ such that corresponding coefficients of $U$ and $V$ are complex conjugates. Since $F$ must be also irreducible over $M$,  $U$ and $V$ do not have factors in common. It follows that $U(x , y)$ and $V(x , y)$ are cubes of linear forms over $M$, say $\xi (x , y)$ and $\eta (x,y )$.

Note that $\xi (x , y) \eta (x , y) $ must be a quadratic form which is cube root of $H(x , y) ^{3}$ and for which the coefficient of $x^{3}$ is a positive real number. Hence we have 
\begin{equation}\label{Fdiag}
\xi (x , y) ^{3} - \eta (x , y)^{3}   =  3\sqrt{-3D} F(x , y),
\end{equation}
\begin{equation}\label{63}
\xi(x , y)^{3} + \eta (x , y)^{3}  = G (x, y),
\end{equation}
\begin{equation}\label{Hdiag}
 \xi(x , y) \eta (x , y) = H (x , y)
 \end{equation}
and 
$$
\frac{\xi (x , y)} {\xi (1 , 0)}  \qquad  \textrm{and} \qquad   \frac{\eta (x , y)}{\eta(1, 0)} \ \in M .
$$
The reason for the last identity is that for any pair of rational integers $x_{0}$ , $y_{0}$, 
$$\xi (x_{0} , y_{0})\,   \qquad  \textrm{and} \qquad  \eta (x_{0} , y_{0})
$$
 are complex conjugates and the discriminant  of $H$ is $ -3D$.

We call a pair of forms $\xi$ and $\eta$ satisfying the above properties a pair of \emph{resolvent} forms.  Note that there are exactly three pairs of resolvent forms, given by 
$$(\xi , \eta), \,  (\omega \xi , \omega ^{2} \eta), \, (\omega^{2}\xi , \omega \xi),
$$
 where $\omega$ is a primitive cube root of unity.

We say that a pair of rational integers $(x , y)$ is related to a pair of resolvent forms if 
\begin{equation}\label{6related}
\left|1 - \frac{\eta(x , y)}{\xi(x , y)}\right| = \min_{0 \leq k \leq 2} \left|\omega ^{k} -\frac{\eta(x , y)}{\xi(x , y)}\right| .
\end{equation}

\begin{thm}\label{T3evertse} (Evertse, 1983)
Let $F$ be a binary cubic form with integral coefficients,  positive discriminant $D$ and  quadratic covariant $H$. Let $k$ be a positive integer. Then the number of solutions of the inequality 
$$
|F(x ,  y)| \leq k
$$
in integers $x, y$ with
$$
H(x , y) \geq \frac{3}{2} (3D)^{1/2} k^3, \qquad  \gcd(x , y) = 1, \qquad y>0 \, \quad \textrm{or}\, \quad  (x, y) = (1, 0)
$$
is at most $9$.
\end{thm}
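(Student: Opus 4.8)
The plan is to exploit the diagonalization of Section~\ref{6s3}: pass to the three pairs of resolvent forms and show that each pair accounts for at most three of the solutions, so the total is at most $9$. First I would observe that every solution $(x,y)$ is related, in the sense of \eqref{6related}, to at least one of the three resolvent pairs $(\xi,\eta)$, $(\omega\xi,\omega^{2}\eta)$, $(\omega^{2}\xi,\omega\eta)$, because the minimum in \eqref{6related} is attained at some $k\in\{0,1,2\}$. It therefore suffices to bound by $3$ the number of solutions related to a fixed pair, say $(\xi,\eta)$. For such a solution set $\beta=\beta(x,y)=\eta(x,y)/\xi(x,y)$; since $\xi(x,y)$ and $\eta(x,y)$ are complex conjugates we have $|\beta|=1$, and being related to $(\xi,\eta)$ means exactly that $\beta$ lies in the closed sector $|\arg\beta|\le\pi/3$.

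Next I would turn $|F(x,y)|\le k$ into a Diophantine approximation statement. From \eqref{Fdiag} and \eqref{Hdiag}, together with $|\xi(x,y)|^{2}=\xi(x,y)\eta(x,y)=H(x,y)$, one gets
$$\left|1-\beta^{3}\right|=\frac{3\sqrt{3D}\,\left|F(x,y)\right|}{\left|\xi(x,y)\right|^{3}}=\frac{3\sqrt{3D}\,\left|F(x,y)\right|}{H(x,y)^{3/2}}\le\frac{3\sqrt{3D}\,k}{\bigl(\tfrac{3}{2}(3D)^{1/2}k^{3}\bigr)^{3/2}},$$
which is extremely small. Since $|\arg\beta|\le\pi/3$ forces $2\le|1+\beta+\beta^{2}|=1+2\cos(\arg\beta)\le3$, the same bound (up to the harmless factor $2$) also controls $|1-\beta|$; writing $\xi-\eta=\mu\,(x-\alpha y)$, where $\alpha$ is the root of $F(\cdot,1)$ with $\xi(\alpha,1)=\eta(\alpha,1)$, this says precisely that $x/y$ is an unusually strong rational approximation to the real number $\alpha$, with quality measured against $H(x,y)$, and $H(x,y)\asymp\max(|x|,|y|)^{2}$ because $F$, hence $H$, is reduced.

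With the problem in this shape, two classical ingredients close the count. The first is a \emph{gap principle}: if $(x_{1},y_{1})$ and $(x_{2},y_{2})$ are distinct related solutions with $H(x_{1},y_{1})\le H(x_{2},y_{2})$, then expanding the nonzero integer $x_{1}y_{2}-x_{2}y_{1}$ as $L(x_{1},y_{1})\,y_{2}-L(x_{2},y_{2})\,y_{1}$ with $L(x,y)=x-\alpha y$, and inserting the smallness of each $|L(x_{i},y_{i})|$ obtained above, forces $H(x_{2},y_{2})$ to exceed a fixed power of $H(x_{1},y_{1})$; this controls the number of comparatively small related solutions. The second is the \emph{Thue--Siegel method}: applying the hypergeometric/Pad\'e approximations to $(1-z)^{1/3}$ — exactly the machinery of Siegel~\cite{Sie} and Evertse~\cite{Ev} — one shows that at most a fixed small number of related solutions can have $|1-\beta^{3}|$ below the Thue--Siegel threshold. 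The hypothesis $H(x,y)\ge\tfrac{3}{2}(3D)^{1/2}k^{3}$ enters precisely here: it makes the ``gap'' in the Thue--Siegel inequality strictly positive for every counted solution, and it rules out the one smallest related solution that would otherwise be allowed per pair. Combining the two ingredients gives at most $3$ related solutions for each of the $3$ resolvent pairs, hence at most $9$ in all.

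The genuinely hard part is the Thue--Siegel estimate: constructing the auxiliary sequence of Pad\'e approximants to $(1-z)^{1/3}$, simultaneously controlling their archimedean size and the size of their denominators over $M=\mathbb{Q}(\sqrt{-3D})$, and — the delicate point — proving the required non-vanishing so that two putative solutions with $|1-\beta^{3}|$ very small cannot both survive. Calibrating this estimate against the gap principle so as to land on exactly $3$ per pair, rather than $4$, is where the precise form of the hypothesis $H\ge\tfrac{3}{2}(3D)^{1/2}k^{3}$ must be used with care. As this is Evertse's 1983 theorem, in practice one either reproduces his argument or simply invokes \cite{Ev}.
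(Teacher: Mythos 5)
The paper does not prove this statement at all: it is quoted with the attribution ``(Evertse, 1983)'' and the paper simply relies on \cite{Ev}, so your closing move --- ``in practice one either reproduces his argument or simply invokes \cite{Ev}'' --- is exactly what the paper does, and your outline does correctly capture the architecture of Evertse's proof (three resolvent pairs, the identity $|1-\beta^3|=3\sqrt{3D}\,|F|/H^{3/2}$, the bound $2\le|1+\beta+\beta^2|\le 3$ on the sector $|\arg\beta|\le\pi/3$, a gap principle, and the hypergeometric method). Two caveats. First, a small but real error: you invoke ``$H(x,y)\asymp\max(|x|,|y|)^2$ because $F$, hence $H$, is reduced,'' but reducedness is \emph{not} a hypothesis of this theorem --- it enters the paper only in Lemma \ref{L1evertse} and Theorem \ref{T2evertse}; the statement here is for an arbitrary cubic form of positive discriminant, and indeed the whole point of phrasing the cutoff in terms of $H(x,y)$ rather than $y$ is to avoid needing reduction. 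Fortunately the gap principle does not require it if you run it through $\xi_2\eta_1-\xi_1\eta_2=\pm\sqrt{-3D}\,(x_1y_2-x_2y_1)$, as the paper does in its gap-principle section, instead of through $x-\alpha y$. Second, read as a self-contained proof your proposal has a genuine gap: everything that actually forces the number $3$ per resolvent pair --- the construction of the Pad\'e approximants to $(1-z)^{1/3}$ over $M=\mathbb{Q}(\sqrt{-3D})$, the non-vanishing argument, and the calibration showing that the specific threshold $\tfrac{3}{2}(3D)^{1/2}k^3$ yields $3$ rather than $4$ solutions per pair --- is named but not executed. Since the paper itself supplies none of this and defers entirely to \cite{Ev}, your deferral is acceptable in context, but you should state it as a citation rather than as a proof.
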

The following is Lemma 1 of \cite{Ev}.
\begin{lemma}\label{L1evertse}
If $D > 0$ and if $F$ is reduced and irreducible then 
$$
H(x , y) \geq \frac{3}{4} D^{1/2} y^2\, \qquad \textrm{for} \, \quad  x, y \in \mathbb{Z},
$$
$$
H(x , y) \geq \frac{3}{2} D^{1/2} y^2\, \qquad \textrm{for} \, \quad x, y \in \mathbb{Z} \, \quad \textrm{with} \quad |x| \geq |2y|.
$$
\end{lemma}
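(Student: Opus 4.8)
\emph{Proof sketch.} The plan is to use only two elementary facts about the Hessian $H(x,y)=Ax^{2}+Bxy+Cy^{2}$ of a reduced irreducible form of positive discriminant: the reduction inequalities $C\ge A\ge|B|$, and the identity $B^{2}-4AC=-3D$ recorded in Section~\ref{6s2}. Since $D>0$, the form $H$ is positive definite, so $A>0$. Rewriting the identity as $4AC=B^{2}+3D$ and inserting $|B|\le A\le C$ gives $4A^{2}\le 4AC=B^{2}+3D\le A^{2}+3D$, hence $A\le\sqrt{D}$; and from $4AC=B^{2}+3D\ge 3D$ we get $C\ge\frac{3D}{4A}$, which combined with $A\le\sqrt D$ also yields $C\ge\frac34\sqrt D$. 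Everything else is completing the square.

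For the first bound I would fix $y\in\mathbb{Z}$ (the case $y=0$ being trivial since then $H(x,0)=Ax^{2}\ge 0$) and minimise $H(t,y)$ over real $t$. Completing the square gives, for every integer $x$,
$$
H(x,y)\ \ge\ H\!\left(-\tfrac{By}{2A},\,y\right)=\frac{4AC-B^{2}}{4A}\,y^{2}=\frac{3D}{4A}\,y^{2},
$$
and since $A\le\sqrt D$ this is at least $\frac{3D}{4\sqrt D}y^{2}=\frac34\sqrt D\,y^{2}$, as claimed.

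For the second bound I would instead exploit the hypothesis $|x|\ge 2|y|$ directly. Splitting $H(x,y)=(Ax^{2}+Bxy)+Cy^{2}$ and estimating $Ax^{2}+Bxy\ge A|x|\bigl(|x|-|y|\bigr)$ using $|B|\le A$, the inequality $|x|\ge 2|y|$ forces $|x|-|y|\ge\frac12|x|$ and $x^{2}\ge 4y^{2}$, so $Ax^{2}+Bxy\ge\frac{A}{2}x^{2}\ge 2Ay^{2}$. Therefore
$$
H(x,y)\ \ge\ 2Ay^{2}+Cy^{2}\ \ge\ \left(2A+\frac{3D}{4A}\right)y^{2},
$$
and AM--GM gives $2A+\frac{3D}{4A}\ge 2\sqrt{\frac{3D}{2}}=\sqrt{6D}>\frac32\sqrt D$, which finishes the proof.

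The only real subtlety — and it is a mild one — is getting the constant $\frac32$ in the second inequality: the crude estimate $H(x,y)\ge 3Ay^{2}$ (from $C\ge A$) is insufficient because we have no lower bound on $A$, only $A\le\sqrt D$. One must retain the $Cy^{2}$ term and feed in $C\ge\frac{3D}{4A}$ before applying AM--GM, so that the resulting bound is independent of the actual size of $A$. No analytic machinery is needed; this is purely a statement about the arithmetic of the Hessian of a reduced cubic form.
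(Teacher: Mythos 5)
Your proof is correct. Note first that the paper itself gives no argument for this lemma --- it simply imports it as Lemma 1 of Evertse's 1983 paper \cite{Ev} --- so you have supplied a proof where the text only cites one; your argument is, in substance, the standard one from reduction theory and matches what Evertse does. All the steps check out: $4AC=B^{2}+3D$ together with $|B|\le A\le C$ gives $A\le\sqrt D$ and $C\ge\frac{3D}{4A}$; completing the square yields $H(x,y)\ge\frac{4AC-B^{2}}{4A}y^{2}=\frac{3D}{4A}y^{2}\ge\frac34\sqrt D\,y^{2}$ for all real $x,y$ (the case $y=0$ being trivial); and for $|x|\ge 2|y|$ the splitting $H=(Ax^{2}+Bxy)+Cy^{2}\ge 2Ay^{2}+Cy^{2}$ followed by AM--GM gives $H(x,y)\ge\sqrt{6D}\,y^{2}$, which is in fact stronger than the stated constant $\frac32$ since $\sqrt6>\frac32$. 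Your closing remark correctly identifies the one place a naive argument fails: bounding $C$ below by $A$ alone is useless without a lower bound on $A$, so one must carry the term $\frac{3D}{4A}$ through AM--GM to get a bound independent of $A$. Nothing is missing.
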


Evertse \cite{Ev} uses the properties of a reduced form stated in Lemma \ref{L1evertse} to obtain the following:

\begin{thm}\label{T2evertse} (Evertse, 1983)
Let $F$ be a reduced, irreducible binary cubic form of positive discriminant with integral coefficients and let $k$ be a positive integer. Then the inequality 
$$
|F(x , y)| \leq k
$$
has at most nine solutions in integers $x$, $y$ with $\gcd(x , y) =1$ and $y \geq 12^{1/4} k^{3/2}$. 
\end{thm}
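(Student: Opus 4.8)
The plan is to deduce this directly from Theorem \ref{T3evertse} together with the first inequality in Lemma \ref{L1evertse}; the only thing to do is to check that the hypothesis $y \geq 12^{1/4} k^{3/2}$ already forces the Hessian lower bound $H(x,y) \geq \tfrac{3}{2}(3D)^{1/2} k^3$ appearing in Theorem \ref{T3evertse}. All of the analytic content (the Thue--Siegel hypergeometric approximation argument, the gap principle, etc.) is packaged inside Theorem \ref{T3evertse}, so what remains is essentially bookkeeping with the numerical constants.

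First I would invoke Lemma \ref{L1evertse}: since $F$ is reduced and irreducible with $D>0$, every integer pair $(x,y)$ satisfies $H(x,y) \geq \tfrac{3}{4} D^{1/2} y^2$. For a solution $(x,y)$ with $y \geq 12^{1/4} k^{3/2}$ this yields $H(x,y) \geq \tfrac{3}{4} D^{1/2} \bigl(12^{1/4} k^{3/2}\bigr)^2 = \tfrac{3}{4}\sqrt{12}\, D^{1/2} k^3$. Now $\tfrac{3}{4}\sqrt{12} = \tfrac{3}{4}\cdot 2\sqrt{3} = \tfrac{3\sqrt{3}}{2}$, so the right-hand side equals $\tfrac{3}{2}\sqrt{3}\,\sqrt{D}\, k^3 = \tfrac{3}{2}(3D)^{1/2} k^3$, which is exactly the threshold in Theorem \ref{T3evertse}. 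In other words the constant $12^{1/4}$ has been chosen precisely so that these two bounds coincide.

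Next I would note that any solution $(x,y)$ counted in the present statement automatically meets the remaining hypotheses of Theorem \ref{T3evertse}: we have $\gcd(x,y)=1$ by the standing convention, and $y \geq 12^{1/4} k^{3/2} > 0$, so in particular $y>0$ and the exceptional case $(x,y)=(1,0)$ does not arise. Hence the set of solutions of $|F(x,y)| \leq k$ with $\gcd(x,y)=1$ and $y \geq 12^{1/4} k^{3/2}$ is a subset of the set counted by Theorem \ref{T3evertse}, and therefore has at most nine elements.

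There is no genuine obstacle here beyond this calibration check; the point worth being careful about is verifying that $\tfrac34\sqrt{12}$ is \emph{equal} to $\tfrac32\sqrt{3}$, rather than merely close, so that the mild-looking condition $y \geq 12^{1/4} k^{3/2}$ really does land on (and not just near) the Hessian condition of Theorem \ref{T3evertse}.
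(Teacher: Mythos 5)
Your proposal is correct and follows exactly the route the paper indicates: the paper states that Evertse obtains Theorem \ref{T2evertse} from the reduced-form bound in Lemma \ref{L1evertse}, and your calibration check $\tfrac{3}{4}\sqrt{12}=\tfrac{3}{2}\sqrt{3}$ correctly converts the hypothesis $y \geq 12^{1/4}k^{3/2}$ into the Hessian threshold $H(x,y)\geq \tfrac{3}{2}(3D)^{1/2}k^{3}$ of Theorem \ref{T3evertse}. Nothing is missing.
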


Having Theorem  \ref{T3evertse} in hand and  from (\ref{Hdiag}), we conclude that there are at most $9$ solutions $(x , y)$ to inequality $|F |\leq h$  for which 
$$
|\xi(x , y)| \geq \frac{\sqrt{3}}{\sqrt{2}} (3D)^{1/4} h^{3/2}.
$$
All we have to do is to give an upper bound for the number of solutions $(x, y)$ for which 
$$
|\xi(x , y)| < \frac{\sqrt{3}}{\sqrt{2}} (3D)^{1/4} h^{3/2}.
$$

In order to prove our main result, in Section \ref{css}, we will show 
\begin{thm}\label{smallthm}
Let $F$ be a reduced, irreducible binary cubic form of positive discriminant $D$ with integral coefficients.  Suppose that $h = \frac{(3D)^{\frac{1}{4} -\epsilon}}{2\pi}$ with $0 < \epsilon < \frac{1}{4}$. Then the inequality 
$$
0 < |F(x , y)| \leq h
$$
has at most $3\, \frac{ \log \left(\frac{3}{8\epsilon} + \frac{1}{2}\right) }{\log 2}$  solutions in integers $x$, $y$ with $\gcd(x , y) =1$ and $0< y < 12^{1/4} h^{3/2}$. Moreover,  the inequality 
$$
0 < |F(x , y)| \leq h
$$
has at most $3\, \frac{ \log \frac{3}{8 \left( \left(\frac{3}{2}\right)\epsilon +  \frac{ \log 2}{\log 3D} \right)}  }{\log 2}+3$
 solutions in integers $x$, $y$ with $\gcd(x , y) =1$ and $0< y < 12^{1/4} h^{3/2}$. 
\end{thm}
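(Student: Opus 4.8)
The plan is to push the diagonalisation of Section~\ref{6s3} through a gap principle. Every solution $(x,y)$ with $y\neq0$ is related, in the sense of (\ref{6related}), to exactly one of the three pairs of resolvent forms, so by the pigeonhole principle it suffices to bound the number of solutions related to a single fixed pair $(\xi,\eta)$ and then multiply by $3$; this is the source of the factor $3$ in the statement. Fix such a pair; in each pair the second form is the complex conjugate of the first, so $\eta(x,y)=\overline{\xi(x,y)}$ for real $x,y$, and we may write $\xi(x,y)=u+iv$. By (\ref{Hdiag}), $H(x,y)=u^{2}+v^{2}$, and the condition (\ref{6related}) says that $\eta(x,y)/\xi(x,y)=e^{-2i\arg\xi(x,y)}$ is at least as close to $1$ as to $\omega$ or $\omega^{2}$; an elementary computation with cube roots of unity shows this forces $3v^{2}\le u^{2}$, hence $3u^{2}-v^{2}\ge 2H(x,y)$. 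On the other hand (\ref{Fdiag}) gives $\operatorname{Im}\bigl(\xi(x,y)^{3}\bigr)=\tfrac32\sqrt{3D}\,F(x,y)$, while $\operatorname{Im}(\xi^{3})=v(3u^{2}-v^{2})$; combining these,
\[
2H(x,y)\,|v|\ \le\ |v|\,(3u^{2}-v^{2})\ =\ \tfrac32\sqrt{3D}\,|F(x,y)|\ \le\ \tfrac32\sqrt{3D}\,h,
\]
which yields the pointwise approximation estimate $\bigl|\xi(x,y)-\eta(x,y)\bigr|=2|v|\le \tfrac{3\sqrt{3D}\,h}{2\,H(x,y)}$.

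Next I would establish a gap principle. Since the quadratic covariant $H$ has discriminant $-3D$, for any two distinct solutions $(x_{1},y_{1})$, $(x_{2},y_{2})$ related to the same pair one has the identity
\[
\xi(x_{1},y_{1})\,\eta(x_{2},y_{2})-\xi(x_{2},y_{2})\,\eta(x_{1},y_{1})\ =\ \pm\sqrt{3D}\,(x_{1}y_{2}-x_{2}y_{1}),
\]
whose right side has absolute value at least $\sqrt{3D}$ because the pairs are primitive with $y_{1},y_{2}>0$. Writing the left side as $\xi(x_{1},y_{1})\bigl(\eta(x_{2},y_{2})-\xi(x_{2},y_{2})\bigr)+\xi(x_{2},y_{2})\bigl(\xi(x_{1},y_{1})-\eta(x_{1},y_{1})\bigr)$, using $|\xi(x_{i},y_{i})|=\sqrt{H(x_{i},y_{i})}$ together with the pointwise estimate, and assuming $H_{1}:=H(x_{1},y_{1})\le H_{2}:=H(x_{2},y_{2})$, one obtains $\sqrt{3D}\le \tfrac32\sqrt{3D}\,h\bigl(\sqrt{H_{1}}/H_{2}+\sqrt{H_{2}}/H_{1}\bigr)\le 3\sqrt{3D}\,h\,\sqrt{H_{2}}/H_{1}$, that is, $H_{2}\ge H_{1}^{2}/(9h^{2})$.

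Now list the solutions related to the fixed pair so that their Hessian values are $H^{(1)}\le\cdots\le H^{(N)}$ and put $a_{j}=H^{(j)}/(9h^{2})$; the gap principle gives $a_{j+1}\ge a_{j}^{2}$, hence $a_{N}\ge a_{1}^{2^{N-1}}$. Since $F$ is reduced and irreducible, Lemma~\ref{6mike} gives $H^{(1)}\ge\tfrac12\sqrt{3D}$, and substituting $h=(3D)^{1/4-\epsilon}/(2\pi)$ yields $a_{1}\ge\tfrac{2\pi^{2}}{9}(3D)^{2\epsilon}>2(3D)^{2\epsilon}$, so $\log a_{1}\ge 2\epsilon\log(3D)$. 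For the upper cap I use that the solutions under consideration satisfy $0<y<12^{1/4}h^{3/2}$: together with Lemma~\ref{L1evertse} and the standard size estimates for reduced forms this bounds $a_{N}$ by a quantity of the shape $c\,(3D)^{3/4-\epsilon}$. For the sharper-looking second statement one instead first removes the at most $9$ solutions with $H\ge\tfrac32(3D)^{1/2}h^{3}$ using Theorem~\ref{T3evertse} — this is the source of the extra additive $3$ — and applies the gap argument to the remaining ones, for which $a_{N}<(3D)^{3/4-\epsilon}/(12\pi)$. In either case, taking logarithms in $a_{1}^{2^{N-1}}\le a_{N}$ and dividing by $\log(3D)$ converts the doubly exponential growth of the $a_{j}$ into $2^{N-1}\le(3/4-\epsilon)/(2\epsilon)+O\!\bigl(1/\log(3D)\bigr)$; keeping exact track of the constants produces $N\le\tfrac1{\log2}\log\bigl(\tfrac3{8\epsilon}+\tfrac12\bigr)$ in the first case and the variant with the $\tfrac{\log2}{\log(3D)}$ term in the second, and multiplying by $3$ finishes the proof.

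The main obstacle is the a priori upper bound for $H(x,y)$ — equivalently for $|\xi(x,y)|$ — valid uniformly over all solutions with $0<y<12^{1/4}h^{3/2}$, because the precise shape of the final estimate, in particular whether the residual $\tfrac{\log2}{\log(3D)}$ term survives, is governed entirely by the exponent of $3D$ appearing there; controlling how large $|x|$, and hence $H$, can become when $|F(x,y)|\le h$ with $y$ small forces one to exploit the geometry of reduced forms. A secondary delicate point is obtaining the pointwise approximation estimate with the sharp constant $\tfrac32\sqrt{3D}$, since that constant feeds directly into the exponent $2\epsilon$ in $\log a_{1}$ and hence into the $\epsilon$-dependence of the final count.
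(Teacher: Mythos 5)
Your overall strategy coincides with the paper's: split solutions among the three pairs of resolvent forms, derive a pointwise bound on $|\xi-\eta|$ from the syzygy (\ref{Fdiag}), combine it with the determinant identity for the discriminant-$(-3D)$ form $H=\xi\eta$ to get the gap principle $H_{2}\geq H_{1}^{2}/(ch^{2})$, seed the iteration with Lemma \ref{6mike}, and take logarithms. Your constants even come out marginally better (you get $c=9$ where the paper's Lemma \ref{cosi} gives $c=\pi^{2}$). However, there is a genuine gap at the step you yourself flag as ``the main obstacle,'' and it is not a technicality: you claim that the condition $0<y<12^{1/4}h^{3/2}$, ``together with Lemma \ref{L1evertse},'' yields an upper bound $a_{N}\leq c\,(3D)^{3/4-\epsilon}$. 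Lemma \ref{L1evertse} gives a \emph{lower} bound $H(x,y)\geq\frac{3}{4}D^{1/2}y^{2}$; it tells you that solutions with small $H$ have small $y$, and it cannot be reversed. A primitive solution with $y=1$ and $|x|$ enormous has $H(x,y)\approx Ax^{2}$ enormous, so the hypothesis on $y$ alone places no cap on $H^{(N)}$, and without that cap the inequality $a_{1}^{2^{N-1}}\leq a_{N}$ gives no bound on $N$.

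The paper resolves this by running the logic in the opposite direction. The gap principle is used only to count solutions in the region $H(x,y)<\frac{3}{2}(3D)^{1/2}h^{3}$ (equivalently $|\xi|<\sqrt{3/2}\,(3D)^{1/4}h^{3/2}$): one finds the least $k$ for which the iterated lower bound (\ref{GGP}) forces $|\xi_{k}|$ past that threshold, so at most $k-1$ related solutions can lie below it (Lemmata \ref{finallemma} and \ref{finallemma2}). The solutions with $H\geq\frac{3}{2}(3D)^{1/2}h^{3}$ are not reached by the gap argument at all; they are handled by Evertse's hypergeometric-method result (Theorem \ref{T3evertse}), which contributes the separate ``$9$'' in Theorems \ref{newmain} and \ref{main}, and Lemma \ref{L1evertse} is invoked only to translate between the $H$-threshold and the $y$-threshold $12^{1/4}h^{3/2}$. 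To repair your argument you must either restrict the gap iteration to the small-$H$ solutions and quote Theorem \ref{T3evertse} (or \ref{T2evertse}) for the rest, or supply an independent upper bound on $H$ over the solution set --- the latter is not available from the reducedness of $F$ alone. A secondary point: even granting your cap, your computation gives $2^{N-1}\leq\frac{3}{8\epsilon}-\frac{1}{2}+O(1/\log(3D))$, which yields $N\leq 1+\frac{1}{\log 2}\log\left(\frac{3}{8\epsilon}-\frac{1}{2}\right)$ rather than the stated $\frac{1}{\log 2}\log\left(\frac{3}{8\epsilon}+\frac{1}{2}\right)$ per resolvent pair, so the bookkeeping of constants also needs to be tightened to match the statement.
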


\textbf{Remark}. In the above Theorem, two upper bounds are given. The first one is independent of the discriminant.

%------------------------------------------------------------------------------------------------------------------
\section{Gap Principle}
%-----------------------------------------------------------------------------------------------------------------------------

Let us fix the resolvent forms $(\xi, \eta)$. Our aim is to give an upper bound for the number of primitive solutions $(x , y)$ that are related to $(\xi, \eta)$.
We will first derive an upper bound for
 $$
\left| 1 - \frac{\eta( x , y )}{\xi(x , y)}\right|.
$$
  From our definitions, we have
 $$
\left| 1 - \frac{\eta( x , y )^3}{\xi(x , y)^3}\right| = \frac{3\sqrt{3D}|F(x , y)|}{\xi (x , y)^3} \leq \frac{3\sqrt{3D} h}{\xi(x , y)^3}
$$ 
and will, in consequence of Lemma \ref{6mike}, assume $H(x, y) \geq\frac{\sqrt{3D}}{2}$, whereby
\begin{equation}\label{xiindelta}
|\xi(x , y)| \geq  \frac{1}{\sqrt{2}} (3D)^{1/4}.
\end{equation}
Hence, by (\ref{Fdiag}), we obtain 
$$
\left| 1 - \frac{\eta( x , y )^3}{\xi(x , y)^3}\right|  \leq \frac{6 \sqrt{2}h}{(3D)^{1/4}}.
$$

\textbf{Remark.} From here it is obvious that $h$ has to be bounded in terms of $D$. 

Let us define
 $$
z(x , y) =  1 - \frac{\eta( x , y )^3}{\xi(x , y)^3}.
$$
Since $\eta(x,y)/\xi(x,y)$ has modulus one, we have
 $$|z| < 2.$$

\begin{lemma}\label{cosi}
Suppose that $(x , y)$ is a solution to $|F| \leq h$ and is related to a  pair of resolvent form $(\xi, \eta)$. Let 
$z(x , y) =  1 - \frac{\eta( x , y )^3}{\xi(x , y)^3}$. We have
$$
 \left|1 - \frac{\eta(x , y)}{\xi(x , y)}\right| <\frac{\pi}{6} |z|.
 $$
 Further, if $|z| <1$, we have
 $$
 \left|1 - \frac{\eta(x , y)}{\xi(x , y)}\right| <\frac{\pi}{9} |z|.
 $$
\end{lemma}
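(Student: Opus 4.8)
The plan is to place the relevant ratio on the unit circle and then reduce everything to two standard one-variable estimates for the sine. Set $w=w(x,y)=\eta(x,y)/\xi(x,y)$. Because $F$ is reduced, irreducible, of positive discriminant, and $y\neq 0$, Lemma \ref{6mike} gives $H(x,y)\geq\tfrac12\sqrt{3D}>0$; since $\xi(x,y)$ and $\eta(x,y)$ are complex conjugates, (\ref{Hdiag}) reads $H(x,y)=\xi(x,y)\eta(x,y)=|\xi(x,y)|^{2}$, so $\xi(x,y)\neq 0$, $w$ is well defined, and $|w|=1$. Write $w=e^{i\theta}$ with $\theta\in(-\pi,\pi]$. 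The points $1,\omega,\omega^{2}$ are equally spaced on the unit circle, so the condition (\ref{6related}) that $(x,y)$ be related to $(\xi,\eta)$ — i.e. that $w$ be at least as close to $1$ as to $\omega$ or $\omega^{2}$ — is equivalent to $|\theta|\leq\pi/3$. Moreover $F(x,y)\neq 0$ (an irreducible cubic has no rational root and $y\neq0$), so by (\ref{Fdiag}) $w^{3}\neq 1$, which together with $|\theta|\leq\pi/3$ forces $\theta\neq 0$. Every quantity in the statement depends only on $|\theta|$, so I may assume $\theta\in(0,\pi/3]$, and then
\[
\Bigl|1-\tfrac{\eta(x,y)}{\xi(x,y)}\Bigr|=|1-w|=2\sin(\theta/2),\qquad |z|=|1-w^{3}|=2\sin(3\theta/2),
\]
where $3\theta/2\in(0,\pi/2]$.

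For the first inequality I would use two elementary facts: $\sin t<t$ for $t>0$, whence $2\sin(\theta/2)<\theta$; and the Jordan-type inequality $\sin t\geq\tfrac{2}{\pi}t$ on $[0,\pi/2]$, which holds because $\sin$ is concave on $[0,\pi]$ so its graph lies above the chord joining $(0,0)$ to $(\pi/2,1)$. The latter gives $|z|=2\sin(3\theta/2)\geq\tfrac{6\theta}{\pi}$, i.e. $\theta\leq\tfrac{\pi}{6}|z|$, and chaining the two yields $\bigl|1-\eta(x,y)/\xi(x,y)\bigr|<\theta\leq\tfrac{\pi}{6}|z|$.

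For the second inequality, assume in addition $|z|<1$, so $\sin(3\theta/2)=|z|/2<1/2$; since $3\theta/2\in(0,\pi/2]$ and $\sin$ is increasing there with $\sin(\pi/6)=1/2$, this forces $3\theta/2<\pi/6$, hence $\theta<\pi/9$. On the shorter interval $[0,\pi/6]$, concavity of $\sin$ gives the sharper chord bound $\sin t\geq\tfrac{\sin(\pi/6)}{\pi/6}\,t=\tfrac{3}{\pi}t$, so $|z|=2\sin(3\theta/2)\geq\tfrac{9\theta}{\pi}$, i.e. $\theta\leq\tfrac{\pi}{9}|z|$; combined again with $2\sin(\theta/2)<\theta$, this gives $\bigl|1-\eta(x,y)/\xi(x,y)\bigr|<\tfrac{\pi}{9}|z|$.

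I expect the only genuinely delicate point to be this second bound, which is essentially sharp: since $\sin(\theta/2)/\sin(3\theta/2)=1/(3-4\sin^{2}(\theta/2))$ tends, as $\theta\uparrow\pi/9$, to $1/(3-4\sin^{2}(\pi/18))$, a quantity lying just below $\pi/9$, one must localize to $[0,\pi/6]$ and use the chord through $(\pi/6,\sin(\pi/6))$ rather than the cruder Jordan inequality — any coarser lower bound for $\sin(3\theta/2)$ would fail to close the estimate. (Equivalently, one can factor $z=(1-w)(1+w+w^{2})$ and use $|1+w+w^{2}|=1+2\cos\theta$, so the content is exactly a lower bound for $1+2\cos\theta$ on $[0,\pi/3]$ and on $[0,\pi/9]$.) The remaining step — translating the "related" hypothesis (\ref{6related}) into the clean restriction $|\theta|\leq\pi/3$ — is routine once $w$ is written as $e^{i\theta}$.
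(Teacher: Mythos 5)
Your proof is correct and follows essentially the same route as the paper: both write $\eta/\xi=e^{i\theta}$, use the relatedness condition to restrict to $|\theta|\leq\pi/3$, and reduce the claim to bounding $\sin(\theta/2)/\sin(3\theta/2)$, your Jordan and chord inequalities being exactly the paper's statement that $|3\theta|/\sqrt{2-2\cos(3\theta)}=t/\sin t$ (with $t=3\theta/2$) is increasing and bounded by $\pi/2$ on $(0,\pi/2)$ and by $\pi/3$ on $(0,\pi/6)$. You also fill in some details the paper leaves implicit (that $|\eta/\xi|=1$ via (\ref{Hdiag}), and that $F(x,y)\neq 0$ so $\theta\neq 0$), which is fine.
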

\begin{proof}
 Put
 $$
 3\theta = \textrm{arg} \left( \frac{\eta(x , y)^{3}}{\xi(x , y)^{3}}\right).
 $$ 
  We have
  $$|\theta| < \frac{\pi}{3}. $$
Since
  $$
  \sqrt{2 - 2\cos(3\theta)} = |z|,
  $$
  when $|z| < 1$ we have
$$|\theta| < \frac{\pi}{9}.$$
 We have assumed that $(x , y)$ is related to $(\xi, \eta)$. Therefore
 $$ 
\left|1 - \frac{\eta(x,y)}{\xi(x,y)} \right| \leq  |\theta|.
 $$
We conclude that
$$
\left| 1 - \frac{\eta(x,y)}{\xi(x,y)}\right| \leq \frac{1}{3} \frac{|3\theta|}{\sqrt{2 - 2\cos(3\theta)}} 
\left|1 - \frac{\eta(x,y)^{3}}{\xi(x,y)^{3}}\right|. 
$$
By differential calculus $\frac{|3\theta|}{\sqrt{2 - 2\cos(3\theta)}} = \frac{|6\theta|}{\sin{\frac{3}{2}\theta}}$ is increasing on the interval $ 0<|\theta| < \frac{\pi}{3} $ and does not exceed 
$\frac{\pi}{2}$. Therefore
 $$
 \left| 1 - \frac{\eta(x , y)}{\xi(x , y)}\right| <\frac{\pi}{6} |z| ,
 $$
 and similarly, when $|z| <1$,  from the fact that $\frac{|3\theta|}{\sqrt{2 - 2\cos(3\theta)}} < \frac{\pi}{3}$ whenever $ 0<|\theta| < \frac{\pi}{9} $ , we conclude that
 $$
 \left| 1 - \frac{\eta(x , y)}{\xi(x , y)}\right| <\frac{\pi}{9} |z| ,
 $$
as desired.
\end{proof} 

Suppose that we have distinct solutions to $|F(x , y)| \leq h$, related to $(\xi,\eta)$ and indexed by $i$, say $(x_i, y_i)$, with $|\xi(x_{i+1}, y_{i+1})| \geq |\xi(x_i, y_i)|)$. Let us write 
$$\eta_{i}= \eta(x_i, y_i)$$
 and 
 $$\xi_{i} = \xi(x_i, y_i).$$
  Since $\xi(x, y)\eta(x, y) = H(x, y)$ is a quadratic form of discriminant $- 3D$, it follows that
$$
 \xi_2\eta_1 - \xi_1\eta_2 =  \pm\sqrt{-3D} (x_1y_2 - x_2 y_1)
 $$
and, since $(x_1, y_1)$, $(x_2, y_2)$ are distinct solutions to $|F (x, y)| \leq h$, we have
$$
\sqrt{3D} \leq | \xi_2\eta_1 - \xi_1\eta_2 | \leq |\xi_{1}||\xi_{2}| \left(   \left | 1 -\frac{\eta_{1}}{\xi_{1}}  \right|    +  \left | 1 -\frac{\eta_{2}}{\xi_{2}}  \right|   \right).
$$
By Lemma \ref{cosi}, we have
$$
\sqrt{3D} \leq  |\xi_{1}||\xi_{2}| \frac{3 \sqrt{3D}\, h\,  \pi}{6}\left(   \frac{1}{|\xi_{1}|^3} + \frac{1}{|\xi_{2}|^3} \right).
$$
Since we assume that $|\xi_{2}| \geq |\xi_{1}|$,
$$
\sqrt{3D} \leq  |\xi_{2}| \frac{ \sqrt{3D}\, h\,  \pi}{2}\left(   \frac{2}{|\xi_{1}|^2} \right).
$$
Therefore,
\begin{equation}\label{myGP}
  |\xi_{2}|\geq |\xi_{1}|^2 \frac{1}{ h\, \pi}.
\end{equation}
We have $h = \frac{(3D)^{1/4 - \epsilon}}{2\pi}$. From (\ref{xiindelta}), we have $|\xi_{1}| > \frac{(3D)^{1/4}}{\sqrt{2}}$. This implies
$$
  |\xi_{2}|\geq  (3D)^{1/4 + \epsilon}.
$$
Using (\ref{myGP}) for $\xi_{3}$ and $\xi_{2}$, we obtain
$$
  |\xi_{3}|\geq |\xi_{2}|^2 \frac{1}{ h\, \pi} \geq  2 (3D)^{1/4 + 3 \epsilon}.
$$
Applying (\ref{myGP}) once more, we get $ |\xi_{4}| \geq 8 (3D)^{1/4 + 7 \epsilon}$.
Generally for $k> 2$, we have
\begin{equation}\label{GGP}
  |\xi_{k}| \geq 2^{2^{k-2} -1} (3D)^{1/4 + (2^{k-1}- 1) \epsilon}.
\end{equation}

%------------------------------------------------------------------------------------
\section{Counting Small Solutions}\label{css}
%-------------------------------------------------------------------------
We shall use our gap principle established in the previous section, 
to detect an integer $k$ for which 
$$
  |\xi_{k}| > \frac{\sqrt{3}}{\sqrt{2}} (3D)^{1/4} h^{3/2}.
$$
After finding such $k$, we can deduce that there can be at most $k-1$ solutions $(x , y)$ related to a a fixed pair of resolvent form $(\xi, \eta)$ satisfying 
$$
|\xi(x , y)| < \frac{\sqrt{3}}{\sqrt{2}} (3D)^{1/4} h^{3/2}.
$$
To find such $k$, by (\ref{GGP}), it is sufficient for $k$ to be large enough to satisfy
\begin{equation}\label{detectk}
2^{2^{k-2} -1} (3D)^{1/4 + (2^{k-1}- 1) \epsilon} \geq \frac{\sqrt{3}}{\sqrt{2}} (3D)^{1/4} h^{3/2}.
\end{equation}

The following Lemma, together with Theorem \ref{T3evertse}, gives us our main result, Theorem \ref{main}.
\begin{lemma}\label{finallemma}
Let $F$ be an irreducible binary cubic form of positive discriminant $D$ with integral coefficients.  Suppose that $h = \frac{(3D)^{\frac{1}{4} -\epsilon}}{2\pi}$ with $0 < \epsilon < \frac{1}{4}$. Then the inequality 
$$
|F(x , y)| \leq h
$$
has at most $3\, \frac{ \log \left(\frac{3}{8\epsilon} + \frac{1}{2}\right) }{\log 2}$ solutions in integers $x$, $y$ with $\gcd(x , y) =1$   and $0 < H(x , y)< \frac{3}{2} (3D)^{1/2} h^{3}$.
\end{lemma}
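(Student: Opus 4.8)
The plan is to convert the hypothesis $0<H(x,y)<\tfrac32(3D)^{1/2}h^{3}$ into a size condition on $|\xi(x,y)|$ and then run the gap principle of the previous section, exactly as anticipated in the discussion around (\ref{detectk}). After passing to an equivalent reduced form—which affects neither the discriminant nor the solution count in question, by the $GL_{2}(\mathbb{Z})$-covariance of $H$—I would use (\ref{Hdiag}) together with the fact that $\xi(x,y)$ and $\eta(x,y)$ are complex conjugates to write $H(x,y)=\xi(x,y)\eta(x,y)=|\xi(x,y)|^{2}$; thus the hypothesis becomes
\[
\tfrac{1}{\sqrt2}(3D)^{1/4}\ \le\ |\xi(x,y)|\ <\ \tfrac{\sqrt3}{\sqrt2}(3D)^{1/4}h^{3/2},
\]
the lower bound being (\ref{xiindelta}), which comes from Lemma~\ref{6mike} since $y\neq0$. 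I would then distribute these solutions among the three classes determined by which of the resolvent pairs $(\xi,\eta)$, $(\omega\xi,\omega^{2}\eta)$, $(\omega^{2}\xi,\omega\eta)$ each is related to via (\ref{6related}); since the pairs are permuted by $\xi\mapsto\omega^{j}\xi$, which leaves $|\xi|$ unchanged, the three classes are interchangeable, and it suffices to bound the number $N$ of solutions below the threshold attached to one fixed pair and then multiply by $3$.

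Fix such a pair and order its solutions by increasing $|\xi|$, say $(x_{1},y_{1}),\dots,(x_{N},y_{N})$ with $\xi_{i}=\xi(x_{i},y_{i})$. Iterating (\ref{myGP}) yields (\ref{GGP}): for $k>2$, $|\xi_{k}|\ge 2^{2^{k-2}-1}(3D)^{1/4+(2^{k-1}-1)\epsilon}$. If $N\ge3$, the last solution satisfies this with $k=N$ while also $|\xi_{N}|<\tfrac{\sqrt3}{\sqrt2}(3D)^{1/4}h^{3/2}$, so (\ref{detectk}) must fail for $k=N$; equivalently $N$ is smaller than any integer $k$ for which (\ref{detectk}) holds, and it remains to produce such a $k$. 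Substituting $h=\tfrac{(3D)^{1/4-\epsilon}}{2\pi}$ turns the right side of (\ref{detectk}) into $\tfrac{\sqrt3}{\sqrt2\,(2\pi)^{3/2}}\,(3D)^{5/8-3\epsilon/2}$, and since the numerical constant $\tfrac{\sqrt3}{\sqrt2\,(2\pi)^{3/2}}$ is less than $1\le 2^{2^{k-2}-1}$ for every $k\ge2$, (\ref{detectk}) holds as soon as the exponent of $3D$ on the left is at least the one on the right, that is $\tfrac14+(2^{k-1}-1)\epsilon\ge\tfrac58-\tfrac32\epsilon$, which rearranges to $\epsilon\bigl(2^{k-1}+\tfrac12\bigr)\ge\tfrac38$. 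Taking $k$ with $2^{k-1}=\tfrac{3}{8\epsilon}+\tfrac12$ makes the left side equal to $\tfrac38+\epsilon>\tfrac38$, so (\ref{detectk}) holds for this $k$; moreover $\tfrac{3}{8\epsilon}+\tfrac12>2$ because $\epsilon<\tfrac14$, so $k>2$ and (\ref{GGP}) genuinely applies. Hence $N\le k-1=\log_{2}\bigl(\tfrac{3}{8\epsilon}+\tfrac12\bigr)$, and the residual possibilities $N\le2$ are seen to respect the same bound with the help of $h\ge1$, hence $3D\ge(2\pi)^{4}$. Summing over the three classes gives the asserted $3\,\dfrac{\log\bigl(\tfrac{3}{8\epsilon}+\tfrac12\bigr)}{\log 2}$.

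The step I expect to be most delicate is this last one: extracting the precise closed form from (\ref{detectk})—locating the smallest admissible $k$, correctly handling the floor/ceiling forced by integrality of $k$, and checking that the low cases ($N=1,2$) do not violate the stated bound. This is bookkeeping rather than a new idea, since the doubly exponential growth in (\ref{GGP}) already forces $N$ to be of logarithmic size in $1/\epsilon$; only the exact constant is at stake. Finally, I would remark that if one does not discard the factor $2^{2^{k-2}-1}$ in (\ref{GGP}) but instead keeps the resulting term of order $\tfrac{\log 2}{\log 3D}$, the very same computation delivers the sharper, discriminant-dependent version of the estimate recorded in Theorem~\ref{smallthm}.
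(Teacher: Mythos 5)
Your proposal is correct and follows essentially the same route as the paper: substitute $h=\frac{(3D)^{1/4-\epsilon}}{2\pi}$ into (\ref{detectk}), discard the harmless numerical constant, reduce to the exponent condition $\epsilon\bigl(2^{k-1}\pm\tfrac12\bigr)\ge\tfrac38$, solve for $k$, and multiply the per-class count $k-1=\log_2\bigl(\tfrac{3}{8\epsilon}+\tfrac12\bigr)$ by the three resolvent pairs. Your version of the exponent condition (with $+\tfrac12$) is the sharper, correctly computed one, while the paper uses the more stringent $-\tfrac12$ variant; both yield the stated bound, and you are in fact slightly more careful about the $k>2$ requirement for (\ref{GGP}) and the integrality of $k$.
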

\begin{proof}
First we substitute the value of $h = \frac{(3D)^{\frac{1}{4} -\epsilon}}{2\pi}$  in  equation (\ref{detectk}) to get
\begin{equation}\label{ineqinproof}
 (3D)^{(2^{k-1}- \frac{1}{2}) \epsilon - \frac{3}{8}} \geq \frac{\sqrt{3}}{\sqrt{2}\pi}  \frac{1}{2^{2^{k-2}} }.
\end{equation}
Note that if 
$$
\left(2^{k-1}- \frac{1}{2}\right) \epsilon - \frac{3}{8} \geq 0
$$
then the inequality (\ref{ineqinproof}) will hold. 
This means there are at most 
$$
\frac{ \log \left(\frac{3}{8\epsilon} + \frac{1}{2}\right) }{\log 2}
$$
solutions related to each fixed pair of resolvent forms $(\xi , \eta)$.  The statement of the lemma follows immediately.
\end{proof}

\begin{lemma}\label{finallemma2}
Let $F$ be an irreducible binary cubic form of positive discriminant $D$ with integral coefficients.  Suppose that $h = \frac{(3D)^{\frac{1}{4} -\epsilon}}{2\pi}$ with $0 \leq \epsilon < \frac{1}{4}$. Then the inequality 
$$
|F(x , y)| \leq h
$$
has at most $3\, \left( \frac{ \log \frac{3}{8 \left( \left(\frac{3}{2}\right)\epsilon +  \frac{ \log 2}{\log (3D)} \right)}  }{\log 2}\right)+3$ solutions in integers $x$, $y$ with $\gcd(x , y) =1$ and $0 < H(x , y)< \frac{3}{2} (3D)^{1/2} h^{3}$.
\end{lemma}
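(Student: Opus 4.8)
The plan is to follow the proof of Lemma~\ref{finallemma} verbatim up to inequality~(\ref{ineqinproof}), and then to replace the crude last step there --- forcing the exponent of $3D$ on the left of~(\ref{ineqinproof}) to be nonnegative --- by a logarithmic estimate that makes use of the size of $3D$. By~(\ref{Hdiag}) we have $H(x,y) = \xi(x,y)\eta(x,y) = |\xi(x,y)|^{2}$ at integer arguments, so the solutions counted in the statement (those with $0 < H(x,y) < \frac{3}{2}(3D)^{1/2}h^{3}$) are exactly those with $|\xi(x,y)| < \frac{\sqrt 3}{\sqrt 2}(3D)^{1/4}h^{3/2}$, and each of them is related to exactly one of the three pairs of resolvent forms. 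Fixing such a pair $(\xi,\eta)$ and ordering the related solutions $(x_{i},y_{i})$ so that $|\xi_{i}|$ is nondecreasing, the gap principle~(\ref{GGP}) shows that the number of them with $|\xi_{i}| < \frac{\sqrt 3}{\sqrt 2}(3D)^{1/4}h^{3/2}$ is at most $k-1$ for any integer $k \ge 2$ satisfying~(\ref{detectk}), equivalently satisfying~(\ref{ineqinproof}) after the substitution $h = \frac{(3D)^{1/4-\epsilon}}{2\pi}$. So it suffices to pin down a small such $k$ and then multiply the resulting per-pair bound by $3$.

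The new ingredient is to take logarithms in~(\ref{ineqinproof}) rather than simply throwing away the factor $2^{-2^{k-2}}$ on its right-hand side. Since $\frac{\sqrt 3}{\sqrt 2\,\pi} < 1$ has negative logarithm, (\ref{ineqinproof}) is implied by
\begin{equation*}
\left( \left( 2^{k-1}-\frac12 \right)\epsilon-\frac38 \right)\log(3D) + 2^{k-2}\log 2 \ \ge\ 0 .
\end{equation*}
Dividing by $\log(3D) > 0$ and using the elementary inequality $2^{k-1}-\frac12 \ge \frac32\cdot 2^{k-2}$, valid for every integer $k \ge 2$, this in turn follows from $2^{k-2}\left( \frac32\epsilon + \frac{\log 2}{\log(3D)} \right) \ge \frac38$, that is, from
\begin{equation*}
k \ \ge\ 2 + \frac{1}{\log 2}\,\log\frac{3}{8\left( \frac{3}{2}\epsilon + \frac{\log 2}{\log(3D)} \right)} .
\end{equation*}
Taking $k$ to be the least integer with this property, the number of related solutions below the threshold for each of the three resolvent pairs is at most $k-1 \le 1 + \frac{1}{\log 2}\log\frac{3}{8\left( \frac{3}{2}\epsilon + \frac{\log 2}{\log(3D)} \right)}$; the extra $+1$ compared with Lemma~\ref{finallemma} is precisely the unit shift from $2^{k-1}$ to $2^{k-2}$ forced by the linearisation above. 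Summing over the three pairs yields the asserted bound $3 + \frac{3}{\log 2}\log\frac{3}{8\left( \frac{3}{2}\epsilon + \frac{\log 2}{\log(3D)} \right)}$, which --- unlike the bound of Lemma~\ref{finallemma} --- stays finite as $\epsilon \to 0$, whence the statement is valid for $\epsilon = 0$ as well.

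I expect the main obstacle to be the bookkeeping: controlling the integer rounding in the passage from the displayed real inequality for $k$ to an actual integer value, and checking that the quantity $\frac{3}{8\left( \frac{3}{2}\epsilon + \frac{\log 2}{\log(3D)} \right)}$ always exceeds $1$ --- so that its logarithm is an honest positive term and the step $2^{k-1}-\frac12 \ge \frac32\cdot 2^{k-2}$ is applied with a legitimate $k \ge 2$. The latter is forced by $h \ge 1$: the relation $h = \frac{(3D)^{1/4-\epsilon}}{2\pi}$ then gives $\epsilon \le \frac14 - \frac{\log(2\pi)}{\log(3D)}$, so that
\begin{equation*}
\frac{3}{2}\epsilon + \frac{\log 2}{\log(3D)} \ \le\ \frac38 - \frac{1}{\log(3D)}\log\frac{(2\pi)^{3/2}}{2} \ <\ \frac38 .
\end{equation*}
One also has to recall that~(\ref{GGP}), though stated there for $k > 2$, is equally valid at $k = 2$, and that the case $k \le 2$ simply says that at most one related solution lies below the threshold --- a fact read off directly from~(\ref{xiindelta}).
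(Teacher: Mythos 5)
Your argument coincides with the paper's own proof of Lemma~\ref{finallemma2}: both take logarithms in (\ref{ineqinproof}), discard the negative term $\log\bigl(\sqrt{3}/(\sqrt{2}\,\pi)\bigr)$, use $2^{k-1}-\tfrac12 \ge \tfrac32\cdot 2^{k-2}$ for $k\ge 2$, solve for $k$, and multiply the per-pair count by the three resolvent pairs. The extra verifications you supply (that $H=|\xi|^2$ converts the Hessian bound into the $\xi$-threshold, and that the argument of the logarithm exceeds $1$ because $h\ge 1$) are correct and only make explicit what the paper leaves implicit.
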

\begin{proof}

Following the proof of Lemma \ref{finallemma}, we notice that in order to satisfy  inequality (\ref{ineqinproof}), it is sufficient to have
$$
2^{k-2} \log 2 +  \log (3D) \left( (2^{k-1}- \frac{1}{2}) \epsilon - \frac{3}{8}\right) > 0 ;
$$
which means
$$
 \left( (2^{k-1}- \frac{1}{2}) \epsilon - \frac{3}{8}\right)  \log(3D)>  - 2^{k-2} \log 2 .
$$
That is
$$
  \left( (2^{k-1}- \frac{1}{2}) \epsilon - \frac{3}{8}\right) >  -\frac{ 2^{k-2} \log 2}{\log (3D)} .
$$
From here,
$$
2^{k-2}\left( \left(2 - \frac{1}{2^{k-1}}\right)\epsilon +  \frac{ \log 2}{\log (3D)} \right) > \frac{3}{8}.
$$
If $k \geq 2$ then it suffices to have
$$
2^{k-2}\left( \left(\frac{3}{2}\right)\epsilon +  \frac{ \log 2}{\log (3D)} \right) > \frac{3}{8}
$$

$$
k- 2 > \frac{ \log \frac{3}{8 \left( \left(\frac{3}{2}\right)\epsilon +  \frac{ \log 2}{\log (3D)} \right)}  }{\log 2}.
$$
This means there are at most 
$$
\frac{ \log \frac{3}{8 \left( \left(\frac{3}{2}\right)\epsilon +  \frac{ \log 2}{\log (3D)} \right)}  }{\log 2}+1
$$
 solutions with $|\xi(x , y)| < \frac{\sqrt{3}}{\sqrt{2}} (3D)^{1/4} h^{3/2}$  related to each pair of resolvent form $(\xi, \eta)$.
  Since we have $3$ pairs of distinct resolvent forms, our proof is complete.

\end{proof}

Now we can deduce Lemma \ref{smallthm} from  Lemmata \ref{finallemma}, \ref{finallemma2}  and \ref{L1evertse}.

\section{acknowledgements}

I would like to thank the referees for careful reading and helpful suggestions.  This manuscript was written during my visits to Mathematisches Forschungsinstitut Oberwolfach and Max-Planck-Institut f\"{u}r Mathematik Bonn. I would like to thank both institutes for providing me with great work environments.

%-----------------------------------------------------------------------------------------------

\end{document}